\documentclass[11pt]{amsart}
\usepackage{amsmath, amssymb}
\usepackage{amscd}
\usepackage{verbatim}
\usepackage{amssymb,latexsym,amsmath,amsfonts}
\usepackage{latexsym}
\usepackage[mathscr]{eucal}
\usepackage{bm}

\title[Decomposition of a $\Gamma_3$-contraction]
{On decomposition of operators having $\Gamma_3$ as a spectral
set}

%please make separate author, address, email blocks for each author
\author{Sourav Pal}
\address{Department of Mathematics, Indian Institute of Technology Bombay, Mumbai - 400076, India.}
\email{sourav@math.iitb.ac.in,
souravmaths@gmail.com}
% You may include the URL for your home page along with your email
%\thanks{} % \thanks entries are to acknowledge grants.
\thanks{The author is supported by Seed Grant of IIT Bombay, CPDA and the INSPIRE Faculty Award
(Award No. DST/INSPIRE/04/2014/001462) of DST, India.}

\keywords{Spectral set, Symmetrized tridisc,
$\Gamma_3$-contraction, Canonical Decomposition}

\subjclass[2010]{47A13, 47A15, 47A20, 47A25, 47A45}

% Place any personal macros used in the document here.

\def ~{\hspace{1mm}}

\def\textmatrix#1&#2\\#3&#4\\{\bigl({#1 \atop #3}\ {#2 \atop #4}\bigr)}
\def\dispmatrix#1&#2\\#3&#4\\{\left({#1 \atop #3}\ {#2 \atop #4}\right)}
\newcommand{\beg}{\begin{equation}}
\newcommand{\eeg}{\end{equation}}
\newcommand{\ben}{\begin{eqnarray*}}
\newcommand{\een}{\end{eqnarray*}}

% THEOREM Environments ---------------------------------------------------
\newtheorem{thm}{Theorem}[section]

\newtheorem{prop}[thm]{Proposition}
\numberwithin{equation}{section}
\theoremstyle{definition}
\newtheorem{defn}[thm]{Definition}

\def\textmatrix#1&#2\\#3&#4\\{\bigl({#1 \atop #3}\ {#2 \atop #4}\bigr)}
\def\dispmatrix#1&#2\\#3&#4\\{\left({#1 \atop #3}\ {#2 \atop #4}\right)}

\begin{document}

\begin{abstract}
The symmetrized polydisc of dimension three is the set
\[
\Gamma_3 =\{ (z_1+z_2+z_3, z_1z_2+z_2z_3+z_3z_1, z_1z_2z_3)\,:\,
|z_i|\leq 1 \,,\, i=1,2,3 \} \subseteq \mathbb C^3\,.
\]
A triple of commuting operators for which $\Gamma_3$ is a spectral
set is called a $\Gamma_3$-contraction. We show that every
$\Gamma_3$-contraction admits a decomposition into a
$\Gamma_3$-unitary and a completely non-unitary
$\Gamma_3$-contraction. This decomposition parallels the canonical
decomposition of a contraction into a unitary and a completely
non-unitary contraction. We also find new characterizations for
the set $\Gamma_3$ and $\Gamma_3$-contractions.
\end{abstract}

\maketitle

\section{Introduction}

One of the most wonderful discoveries in one variable operator
theory is the canonical decomposition of a contraction which
ascertains that every contraction operator (i.e, an operator with
norm not greater than $1$) admits a unique decomposition into two
orthogonal parts of which one is a unitary and the other is a
completely non-unitary contraction. More precisely, for an
operator $T$ with norm not greater than one acting on a Hilbert
space $\mathcal H$, there exist unique reducing subspaces
$\mathcal H_1, \mathcal H_2$ of $T$ such that $\mathcal H=\mathcal
H_1 \oplus \mathcal H_2$, $T|_{\mathcal H_1}$ is a unitary and
$T|_{\mathcal H_2}$ is a completely non-unitary contraction (see
Theorem 3.2 in Ch-I, \cite{nagy} for details). A contraction on a
Hilbert space is said to be \textit{completely non-unitary} if
there is no reducing subspace on which the operator acts like a
unitary. Following von Neumann's famous notion of spectral set for
an operator (which we define below), a contraction is better
understood as an operator having the closed unit disk
$\overline{\mathbb D}$ of the complex plane as a spectral set.
Indeed, in 1951 von Neumann proved the following theorem whose
impact has been extraordinary.

\begin{thm}[von Neumann, \cite{vN}]
An operator $T$ acting on a Hilbert space is a contraction if and
only if the closed unit disk $\overline{\mathbb D}$ is a spectral
set for $T$.
\end{thm}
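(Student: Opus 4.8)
The plan is to prove the two implications separately: the forward implication is essentially a one-line observation, while the converse is the substantive content, amounting to von Neumann's inequality. At the outset I would recall the meaning of the statement: $\overline{\mathbb D}$ is a spectral set for $T$ precisely when $\sigma(T)\subseteq\overline{\mathbb D}$ and $\|r(T)\|\le \|r\|_{\infty,\overline{\mathbb D}}$ for every rational function $r$ with poles off $\overline{\mathbb D}$. Since $\overline{\mathbb D}$ is polynomially convex and such rational functions can be approximated uniformly on $\overline{\mathbb D}$ by polynomials, it suffices throughout to treat polynomials $p$.

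For the easy direction, suppose $\overline{\mathbb D}$ is a spectral set for $T$. Applying the defining inequality to the coordinate polynomial $p(z)=z$ gives $\|T\|=\|p(T)\|\le \sup_{|z|\le 1}|z|=1$, so $T$ is a contraction.

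For the converse, assume $\|T\|\le 1$. The spectral containment is immediate from the spectral radius bound $r(T)\le\|T\|\le 1$, whence $\sigma(T)\subseteq\overline{\mathbb D}$. The heart of the matter is to establish $\|p(T)\|\le \|p\|_{\infty,\overline{\mathbb D}}$ for every polynomial $p$, and my approach would be the dilation route. Invoking the Sz.-Nagy unitary dilation theorem (as developed in \cite{nagy}), I would produce a Hilbert space $\mathcal{K}\supseteq\mathcal{H}$ and a unitary $U$ on $\mathcal{K}$ such that $T^n=P_{\mathcal{H}}\,U^n|_{\mathcal{H}}$ for all $n\ge 0$, where $P_{\mathcal{H}}$ is the orthogonal projection of $\mathcal{K}$ onto $\mathcal{H}$. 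By linearity this yields $p(T)=P_{\mathcal{H}}\,p(U)|_{\mathcal{H}}$ for every polynomial $p$. Since $U$ is unitary, hence normal with $\sigma(U)\subseteq\mathbb{T}$, the spectral theorem gives $\|p(U)\|=\sup_{z\in\sigma(U)}|p(z)|\le\sup_{|z|=1}|p(z)|$, and by the maximum modulus principle this last supremum equals $\|p\|_{\infty,\overline{\mathbb D}}$. Compressing to $\mathcal{H}$ can only decrease the norm, so $\|p(T)\|=\|P_{\mathcal{H}}\,p(U)|_{\mathcal{H}}\|\le\|p(U)\|\le\|p\|_{\infty,\overline{\mathbb D}}$, which is exactly what is needed.

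The main obstacle is the construction of the unitary dilation $U$. If one prefers to avoid quoting the dilation theorem outright, I would give the self-contained route: first reduce to a strict contraction by replacing $T$ with $sT$ and letting $s\uparrow 1$, then build the minimal isometric dilation using the defect operator $D_T=(I-T^*T)^{1/2}$ on an infinite ampliation $\mathcal{H}\oplus\mathcal{H}\oplus\cdots$, and finally extend this isometry to a unitary, verifying the power-dilation identity $T^n=P_{\mathcal{H}}U^n|_{\mathcal{H}}$ by direct computation. Either way, the decisive point is the same: once the problem is transported to a normal (unitary) operator, von Neumann's inequality follows at once from the spectral theorem together with the maximum modulus principle.
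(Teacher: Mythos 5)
The paper never proves this statement: it is quoted as von Neumann's classical 1951 theorem, with the citation \cite{vN} standing in for the proof, so there is no in-paper argument to compare yours against. Judged on its own merits, your proof is correct and is the standard Sz.-Nagy--Foias route (the one taken in the paper's reference \cite{nagy}): the easy direction by testing the spectral-set inequality on the coordinate polynomial $p(z)=z$, and the substantive direction (von Neumann's inequality) by compressing a unitary power dilation, $p(T)=P_{\mathcal H}\,p(U)|_{\mathcal H}$, and then invoking the spectral theorem for $U$ together with the maximum modulus principle. This is not circular, since the existence of the unitary dilation is proved without von Neumann's inequality. It is worth noting that this differs from von Neumann's original argument, which predates dilation theory and proceeds through M\"obius maps and a Schur-type factorization. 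One step you state a bit loosely is the reduction from rational functions in $\mathcal R(\overline{\mathbb D})$ to polynomials: the cleanest justification is that a rational $r$ with poles off $\overline{\mathbb D}$ is holomorphic on a disc of radius $1+\delta$, so its Taylor series $\sum a_k z^k$ has $\sum|a_k|<\infty$; since $\|T^k\|\le 1$ the series $\sum a_k T^k$ converges in norm to $r(T)$ and one gets $r(T)=P_{\mathcal H}\,r(U)|_{\mathcal H}$ directly, rather than appealing only to uniform approximation plus polynomial convexity, which by itself identifies a limit operator but does not immediately identify it with $r(T)=p(T)q(T)^{-1}$. With that detail filled in, the argument is complete.
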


Since an operator having $\overline{\mathbb D}$ as a spectral set
admits a canonical decomposition, it is naturally asked whether we
can decompose operators having a particular domain in $\mathbb
C^n$ as a spectral set. In \cite{ay-jot}, Agler and Young answered
this question by showing an explicit decomposition of a pair of
commuting operators having the closed symmetrized bidisc
\[
\Gamma_2 =\{(z_1+z_2,z_1z_2)\,:\,|z_i|\leq 1,\, i=1,2\}
\]
 as a spectral set (Theorem 2.8, \cite{ay-jot}). In this article, we provide an analogous
 decomposition for operators having the closed symmetrized tridisc
 \[
\Gamma_3 =\{ (z_1+z_2+z_3, z_1z_2+z_2z_3+z_3z_1, z_1z_2z_3)\,:\,
|z_i|\leq 1 \,,\, i=1,2,3 \}
 \]
as a spectral set. The reason behind considering the symmetrized
polydisc of dimension $3$ in particular is that there are
substantial variations in operator theory if we move from two to
three dimensional symmetrized polydisc, e.g., rational dilation
succeeds on the symmetrized bidisc \cite{ay-jfa, tirtha-sourav,
sourav} but fails on the symmetrized tridisc, \cite{sourav1}.
This article can be considered as a sequel of \cite{sourav1}.\\

A compact subset $X$ of $\mathbb C^n$ is said to be a
\textit{spectral set} for a commuting $n$-tuple of bounded
operators $\underline{T}=(T_1,\hdots,T_n)$ defined on a Hilbert
space $\mathcal H$ if the Taylor joint spectrum
$\sigma_T(\underline{T})$ of $\underline{T}$ is a subset of $X$
and
\[
\|f(\underline{T})\|\leq \|f\|_{\infty,
X}=\sup\{|f(z_1,\hdots,z_n)|\,:\,(z_1,\hdots,z_n)\in\ X\}\,,
\]
for all rational functions $f$ in $\mathcal R(X)$. Here $\mathcal
R(X)$ denotes the algebra of all rational functions on $X$, that
is, all quotients $p/q$ of holomorphic polynomials $p,q$ in
$n$-variables for which $q$ has no zeros in $X$.\\

For $n\geq 2$, the symmetrization map in $n$-complex variables
$z=(z_1,\dots,z_n)$ is the following proper holomorphic map
\[
\pi_n(z)=(s_1(z),\dots, s_{n-1}(z), p(z))
\]
 where
 \[
s_i(z)= \sum_{1\leq k_1 \leq k_2 \dots \leq k_i \leq n-1}
z_{k_1}\dots z_{k_i} \quad \text{ and } p(z)=\prod_{i=1}^{n}z_i\,.
 \]
The closed \textit{symmetrized} $n$-\textit{disk} (or simply
closed \textit{symmetrized polydisc}) is the image of the closed
unit $n$-disc $\overline{\mathbb D^n}$ under the symmetrization
map $\pi_n$, that is, $\Gamma_n := \pi_n(\overline{\mathbb D^n})$.
Similarly the open symmetrized polydisc $\mathbb G_n$ is defined
as the image of the open unit polydisc $\mathbb D^n$ under
$\pi_n$. The set $\Gamma_n$ is polynomially convex but not convex
(see \cite{edi-zwo, BSR}). So in particular the closed and open
symmetrized tridisc are the sets
\begin{align*}
\Gamma_3 &=\{ (z_1+z_2+z_3,z_1z_2+z_2z_3+z_3z_1,z_1z_2z_3):
\,|z_i|\leq 1, i=1,2,3 \} \subseteq \mathbb C^3 \\ \mathbb G_3 &
=\{ (z_1+z_2+z_3,z_1z_2+z_2z_3+z_3z_1,z_1z_2z_3): \,|z_i|< 1,
i=1,2,3 \}\subseteq \Gamma_3.
\end{align*}

We obtain from the literature (see \cite{edi-zwo, BSR}) the fact
that the distinguished boundary of the symmetrized polydisc is the
symmetrization of the distinguished boundary of the
$n$-dimensional polydisc, which is $n$-torus $\mathbb T^n$. Hence
the distinguished boundary for $\Gamma_3$ is the set
\begin{align*}
b\Gamma_3 &=\{(z_1+z_2+z_3,z_1z_2+z_2z_3+z_3z_1,z_1z_2z_3):
\,|z_i|=1, i=1,2,3\}.
\end{align*}

Operator theory on the symmetrized polydiscs of dimension $2$ and
$n$ have been extensively studied in past two decades
\cite{ay-jfa, ay-jot, AY, tirtha-sourav, tirtha-sourav1, BSR,
sourav, pal-shalit}.

\begin{defn}
A triple of commuting operators $(S_1,S_2,P)$ on a Hilbert space
$\mathcal H$ for which $\Gamma_3$ is a spectral set is called a
$\Gamma_3$-$contraction$. A $\Gamma_3$-contraction $(S_1,S_2,P)$
is said to a \textit{completely non-unitary} if $P$ is a
completely non-unitary contraction.
\end{defn}

It is evident from the definition that if $(S_1,S_2,P)$ is a
$\Gamma_3$-contraction then $S_1,S_2$ have norms not greater than
$3$ and $P$ is a contraction. Unitaries, isometries and
co-isometries are important special classes of contractions. There
are natural analogues of these classes for
$\Gamma_3$-contractions.

\begin{defn}
Let $S_1,S_2,P$ be commuting operators on a Hilbert space
$\mathcal H$. We say that $(S_1,S_2,P)$ is
\begin{itemize}
\item [(i)] a $\Gamma_3$-\textit{unitary} if $S_1,S_2,P$ are
normal operators and the Taylor joint spectrum
$\sigma_T(S_1,S_2,P)$ is contained in $b\Gamma_3$ ; \item [(ii)] a
$\Gamma_3$-\textit{isometry} if there exists a Hilbert space
$\mathcal K$ containing $\mathcal H$ and a $\Gamma_3$-unitary
$(\tilde{S_1},\tilde{S_2},\tilde{P})$ on $\mathcal K$ such that
$\mathcal H$ is a common invariant subspace for
$\tilde{S_1},\tilde{S_2},\tilde{P}$ and that
$S_i=\tilde{S_i}|_{\mathcal H}$ for $i=1,2$ and
$\tilde{P}|_{\mathcal H}=P$; \item [(iii)] a
$\Gamma_3$-\textit{co-isometry} if $(S_1^*,S_2^*,P^*)$ is a
$\Gamma_3$-isometry.
\end{itemize}
\end{defn}

Moreover, a $\Gamma_3$-isometry $(S_1,S_2,P)$ is said to be
\textit{pure} if $P$ is a pure contraction, that is,
$P^*\rightarrow 0$ strongly
as $n\rightarrow \infty$.\\

The main result of this article is the following explicit
orthogonal decomposition of a $\Gamma_3$-contraction which
parallels the one-variable canonical decomposition.

\begin{thm}\label{thm:decomp}
Let $(S_1,S_2,P)$ be a $\Gamma_3$-contraction on a Hilbert space
$\mathcal H$. Let $\mathcal H_1$ be the maximal subspace of
$\mathcal H$ which reduces $P$ and on which $P$ is unitary. Let
$\mathcal H_2=\mathcal H\ominus \mathcal H_1$. Then $\mathcal
H_1,\mathcal H_2$ reduce $S_1, S_2$; $(S_1|_{\mathcal
H_1},S_2|_{\mathcal H_1},P|_{\mathcal H_1})$ is a
$\Gamma_3$-unitary and $(S_1|_{\mathcal H_2},S_2|_{\mathcal
H_2},P|_{\mathcal H_2})$ is a completely non-unitary
$\Gamma_3$-contraction. The subspaces $\mathcal H_1$ or $\mathcal
H_2$ may equal to the trivial subspace $\{0\}$.
\end{thm}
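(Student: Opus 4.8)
The plan is to reduce the statement to the one–variable canonical decomposition of the contraction $P$ together with the algebraic constraints that being a $\Gamma_3$-contraction imposes on the triple. By the canonical decomposition of $P$ (Theorem~3.2, Ch.~I of \cite{nagy}) the subspace $\mathcal H_1$ is well defined, reduces $P$, and $P|_{\mathcal H_1}$ is unitary while $P|_{\mathcal H_2}$ is completely non-unitary; write $U=P|_{\mathcal H_1}$ and $P_2=P|_{\mathcal H_2}$. Since $U$ is unitary, the defect operator $D_P=(I-P^*P)^{1/2}$ satisfies $D_P|_{\mathcal H_1}=0$, so with respect to $\mathcal H=\mathcal H_1\oplus\mathcal H_2$ we have $D_P=0\oplus D_{P_2}$. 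The whole difficulty is concentrated in showing that $\mathcal H_1$ also reduces $S_1$ and $S_2$; once this is done the two remaining assertions follow quickly.

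For the reduction step I would use the fundamental relations valid for every $\Gamma_3$-contraction, namely that there exist operators $F_1,F_2$ on $\mathcal D_P=\overline{\operatorname{Ran}\,D_P}$ with
\[
S_1-S_2^*P=D_PF_1D_P,\qquad S_2-S_1^*P=D_PF_2D_P.
\]
Because $D_P=0\oplus D_{P_2}$, both right-hand sides are supported on $\mathcal H_2$. Writing $S_1=\left(\begin{smallmatrix}A&B\\ C&D\end{smallmatrix}\right)$ and $S_2=\left(\begin{smallmatrix}A'&B'\\ C'&D'\end{smallmatrix}\right)$ in block form and equating the $(1,1),(1,2),(2,1)$ entries of $S_1-S_2^*P$ and of $S_2-S_1^*P$ to zero yields $A=A'^*U$, $A'=A^*U$, $B=C'^*P_2$, $C=B'^*U$, $B'=C^*P_2$ and $C'=B^*U$. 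Combining these with the commutation identities coming from $S_iP=PS_i$ (which give $CU=P_2C$, $UB=BP_2$, $C'U=P_2C'$, $UB'=B'P_2$) I would eliminate variables to obtain $C=P_2^*CU$ together with $CU=P_2C$, whence $P_2^*P_2C=C$ and $P_2P_2^*C=C$; that is, $D_{P_2}C=0$ and $D_{P_2^*}C=0$, and by the same manipulation $D_{P_2}C'=0$ and $D_{P_2^*}C'=0$.

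The hard part is turning these annihilation identities into the vanishing of the off-diagonal blocks, and here the complete non-unitarity of $P_2$ is decisive. From $CU=P_2C$ and $D_{P_2}C=0$ one gets $P_2^*C=CU^*$, so $\operatorname{Ran} C$ is invariant under both $P_2$ and $P_2^*$; hence $\overline{\operatorname{Ran} C}$ reduces $P_2$, and on it $P_2$ is simultaneously isometric ($D_{P_2}C=0$) and co-isometric ($D_{P_2^*}C=0$), i.e. unitary. Since $P_2$ is completely non-unitary this forces $\overline{\operatorname{Ran} C}=\{0\}$, so $C=0$; then $C=B'^*U$ with $U$ invertible gives $B'=0$. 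Running the identical argument for $C'$ (using $C'U=P_2C'$ and its adjoint) gives $C'=0$, and $C'=B^*U$ yields $B=0$. Thus all off-diagonal blocks vanish, so $\mathcal H_1$ and $\mathcal H_2$ reduce both $S_1$ and $S_2$.

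It then remains only to identify the two pieces. Restricting a triple for which $\Gamma_3$ is a spectral set to a common reducing subspace again produces such a triple: operator norms only decrease under restriction to an invariant subspace, the rational functional calculus is compatible with reducing subspaces, and the Taylor spectrum of the restriction is contained in the original spectrum. Hence both $(S_1|_{\mathcal H_i},S_2|_{\mathcal H_i},P|_{\mathcal H_i})$ are $\Gamma_3$-contractions. On $\mathcal H_2$ the last coordinate $P_2$ is completely non-unitary by construction, so by definition $(S_1|_{\mathcal H_2},S_2|_{\mathcal H_2},P_2)$ is a completely non-unitary $\Gamma_3$-contraction. On $\mathcal H_1$ the coordinate $U$ is unitary and the relations $A=A'^*U$, $A'=A^*U$ (equivalently $S_1=S_2^*P$ and $S_2=S_1^*P$ on $\mathcal H_1$) hold, so feeding this data into the characterization of $\Gamma_3$-unitaries established earlier shows that $(S_1|_{\mathcal H_1},S_2|_{\mathcal H_1},U)$ is a $\Gamma_3$-unitary. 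Either block may be trivial, corresponding to $P$ being already unitary or already completely non-unitary, which completes the proof.
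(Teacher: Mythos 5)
Your proof is correct, and it reaches the conclusion by a different middle step than the paper, although the overall skeleton is the same: both start from the canonical decomposition $P=U\oplus P_2$, both derive the same six block relations ($A=A'^*U$, $A'=A^*U$, $B=C'^*P_2$, $C=B'^*U$, $B'=C^*P_2$, $C'=B^*U$), and both then use the commutation relations plus complete non-unitarity of $P_2$ to kill the off-diagonal blocks (your ``$\overline{\operatorname{Ran}}\,C$ reduces $P_2$ and $P_2$ is unitary on it'' is the same mechanism as the paper's ``$P_2$ is unitary on the range of $S_{212}^*$''). The difference is how the six relations are obtained. The paper extracts them directly from Proposition \ref{lem:3}: it adds $\Phi_1(\omega S_1,\omega^2S_2,\omega^3P)$ and $\Phi_2(\beta S_1,\beta^2S_2,\beta^3P)$, views the result as a positive $2\times 2$ block operator matrix $\bigl(\begin{smallmatrix} R&X\\ X^*&Q \end{smallmatrix}\bigr)\geq 0$, gets $R=0$ by choosing $\omega,\beta$ suitably, and then uses Bhatia's criterion $X=R^{1/2}KQ^{1/2}$ to conclude $X=0$. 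You instead invoke the fundamental-operator equations $S_1-S_2^*P=D_PF_1D_P$, $S_2-S_1^*P=D_PF_2D_P$, so that $D_P=0\oplus D_{P_2}$ localizes everything to $\mathcal H_2$ and all six relations drop out at once. This is cleaner, but it is the one place where you owe a citation or a proof: the existence of $F_1,F_2$ for a $\Gamma_3$-contraction is a nontrivial theorem. It is available in \cite{sourav1}, and it can also be derived from Proposition \ref{lem:3} (taking $\beta^2=\pm 1$ and averaging gives $3(I-P^*P)-\text{Re }\omega(S_1-S_2^*P)\geq 0$ for all $\omega\in\mathbb T$, and the sesquilinear-form argument of \cite{tirtha-sourav} then produces $F_1$; similarly $F_2$), so both proofs ultimately rest on the same positivity --- you package it through the fundamental operators, while the paper manipulates it directly and thus stays self-contained. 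A secondary difference works in your favor: you prove reducibility of $\mathcal H_1$ first and only then identify the pieces, so the unitary part follows immediately from part (2) of Theorem \ref{thm:tu} (a restriction of a $\Gamma_3$-contraction to a joint reducing subspace is again a $\Gamma_3$-contraction, by polynomial convexity); the paper identifies $(S_{111},S_{211},P_1)$ as a $\Gamma_3$-unitary before reducibility is known, which requires compressing the $\Gamma_2$-contraction $(\frac{2}{3}S_1,\frac{1}{3}S_2)$ to a subspace not yet known to be invariant --- a delicacy your ordering avoids.
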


En route we find few characterizations for the set $\Gamma_3$ and
also for the $\Gamma_3$-contractions which we accumulate in
section 2.

\section{Background material}

In this section we recall some results from literature about the
geometry and operator theory on the set $\Gamma_3$. Also we obtain
few new results in the same direction which we accumulate here. We
begin with a few characterizations of the set $\Gamma_3$.

\begin{thm}\label{thm:sc1}

Let $(s_1,s_2,p)\in \mathbb C^3$. Then the following are
equivalent:
\begin{enumerate}
\item $(s_1,s_2,p)\in\Gamma_3$\,; \item $(\omega s_1,\omega^2 s_2,
\omega^3 p)\in \Gamma_3$ for all $\omega\in\mathbb T$ \,;  \item
$|p|\leq 1$ and there exists $(c_1,c_2)\in \Gamma_2$ such that

\[
s_1=c_1+\bar{c_2}p \text{ and } s_2=c_2+\bar{c_1}p,
\]
where $\Gamma_2$ is the closed symmetrized bidisc defined as
\[
\Gamma_2 =\{ (z_1+z_2,z_1z_2)\,:\, z_1,z_2\in \overline{\mathbb D}
\}.
\]
\end{enumerate}

\end{thm}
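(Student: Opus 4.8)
The plan is to dispose of the geometric equivalence $(1)\Leftrightarrow(2)$ first and then treat the substantive equivalence $(1)\Leftrightarrow(3)$ by splitting on whether $|p|<1$ or $|p|=1$. For $(1)\Rightarrow(2)$, I would write $(s_1,s_2,p)=\pi_3(z_1,z_2,z_3)$ with each $|z_i|\le 1$ and observe that for $\omega\in\mathbb T$ the triple $(\omega z_1,\omega z_2,\omega z_3)$ again has each modulus at most $1$, while the elementary symmetric polynomials scale as $s_1\mapsto\omega s_1$, $s_2\mapsto\omega^2 s_2$, $p\mapsto\omega^3 p$; hence $(\omega s_1,\omega^2 s_2,\omega^3 p)=\pi_3(\omega z_1,\omega z_2,\omega z_3)\in\Gamma_3$. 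The converse $(2)\Rightarrow(1)$ is immediate on setting $\omega=1$.

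For $(1)\Leftrightarrow(3)$ the key algebraic observation is that the relations $s_1=c_1+\bar c_2 p$ and $s_2=c_2+\bar c_1 p$, together with the conjugate of the second, form a conjugate-linear system in $(c_1,c_2)$ whose determinant is $1-|p|^2$. Thus for $|p|<1$ the pair is forced to be
\[
c_1=\frac{s_1-\bar s_2\, p}{1-|p|^2},\qquad c_2=\frac{s_2-\bar s_1\, p}{1-|p|^2},
\]
so that condition (3) is \emph{equivalent} to requiring this canonically determined pair to lie in $\Gamma_2$. I would then invoke the standard description of the symmetrized bidisc, namely $(c_1,c_2)\in\Gamma_2$ if and only if $|c_2|\le 1$ and $c_1=\gamma+\bar\gamma c_2$ for some $\gamma\in\overline{\mathbb D}$, equivalently $|c_2|\le 1$ and $|c_1-\bar c_1 c_2|\le 1-|c_2|^2$. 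For $(1)\Rightarrow(3)$ one substitutes the elementary symmetric functions of $z_1,z_2,z_3$; here $s_1-\bar s_2 p=z_1(1-|z_2z_3|^2)+z_2(1-|z_1z_3|^2)+z_3(1-|z_1z_2|^2)$ and $s_2-\bar s_1 p=z_2z_3(1-|z_1|^2)+z_3z_1(1-|z_2|^2)+z_1z_2(1-|z_3|^2)$, and the two scalar inequalities reduce to polynomial inequalities in the moduli $|z_i|\le 1$. Conversely, for $(3)\Rightarrow(1)$ with $|p|<1$, writing $c_1=w_1+w_2$, $c_2=w_1w_2$ with $|w_i|\le 1$, I would show that the cubic $z^3-s_1z^2+s_2z-p$ has all three zeros in $\overline{\mathbb D}$: either by the homotopy $p\mapsto tp$, $t\in[0,1]$ (at $t=0$ the zeros are $0,w_1,w_2\in\overline{\mathbb D}$, and one controls the zeros so none exits $\overline{\mathbb D}$), or by assuming a zero of modulus $>1$ and forcing a contradiction with $(c_1,c_2)\in\Gamma_2$.

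The boundary case $|p|=1$ is handled separately and is short. If $(1)$ holds with $|p|=1$, then necessarily $|z_i|=1$ for every $i$, and the explicit choice $(c_1,c_2)=(z_1+z_2,z_1z_2)\in\Gamma_2$ satisfies $c_1+\bar c_2 p=(z_1+z_2)+|z_1z_2|^2 z_3=s_1$ and $c_2+\bar c_1 p=z_1z_2+|z_1|^2z_2z_3+|z_2|^2z_1z_3=s_2$, giving $(3)$. In the reverse direction one uses that $|p|=1$ forces the consistency relation $s_2=p\,\bar s_1$, whence the cubic $z^3-s_1z^2+s_2z-p$ is self-inversive (its coefficient vector $(-p,s_2,-s_1,1)$ satisfies $a_k=-p\,\overline{a_{3-k}}$). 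An odd-degree self-inversive polynomial has at least one zero on $\mathbb T$, and I would use $(c_1,c_2)\in\Gamma_2$ to exclude a reflected pair straddling $\mathbb T$, placing all zeros on $\mathbb T$ and hence $(s_1,s_2,p)\in b\Gamma_3\subseteq\Gamma_3$.

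I expect the main obstacle to be the computational core of $(1)\Rightarrow(3)$, that is, proving $|c_1-\bar c_1 c_2|\le 1-|c_2|^2$ (and $|c_2|\le1$) after the substitution, since these are genuine polynomial inequalities in $z_1,z_2,z_3$ and their conjugates rather than formal identities; organizing the estimate so that the constraints $|z_i|\le 1$ enter transparently, and symmetrically ruling out an exterior zero in $(3)\Rightarrow(1)$, is where the real content of the theorem lies.
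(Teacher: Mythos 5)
Your treatment of $(1)\Leftrightarrow(2)$ is correct, and it is actually more self-contained than the paper's: you scale the underlying points, noting $(\omega s_1,\omega^2 s_2,\omega^3 p)=\pi_3(\omega z_1,\omega z_2,\omega z_3)$, whereas the paper obtains this equivalence by passing through condition $(3)$ and replacing $(c_1,c_2)$ by $(\omega c_1,\omega^2 c_2)$. That part is unproblematic.

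The genuine gap is in $(1)\Leftrightarrow(3)$, and it sits exactly where you say the ``real content of the theorem lies.'' The paper does not prove this equivalence at all; it cites Costara (Theorem 3.7 of \cite{costara1}) and only proves $(1)\Leftrightarrow(2)$ from it. You instead set out to prove $(1)\Leftrightarrow(3)$ from scratch, but every hard step is deferred: for $(1)\Rightarrow(3)$ with $|p|<1$ you correctly derive the forced candidates $c_1=(s_1-\bar s_2 p)/(1-|p|^2)$, $c_2=(s_2-\bar s_1 p)/(1-|p|^2)$ and correctly expand $s_1-\bar s_2 p$ and $s_2-\bar s_1 p$ in the $z_i$, but the inequalities $|c_2|\le 1$ and $|c_1-\bar c_1 c_2|\le 1-|c_2|^2$ are never established, and they are not formal identities --- this is precisely Costara's theorem. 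Similarly, in $(3)\Rightarrow(1)$ the homotopy argument (``one controls the zeros so none exits $\overline{\mathbb D}$'') requires showing no zero crosses $\mathbb T$ at an intermediate time, which is again the same boundary analysis being postponed; and in the $|p|=1$ case, excluding a reflected pair of zeros $z,1/\bar z$ straddling $\mathbb T$ using $(c_1,c_2)\in\Gamma_2$ is asserted, not argued. So as written, the proposal establishes $(1)\Leftrightarrow(2)$ and the easy half of the boundary case of $(3)$, but the substantive equivalence $(1)\Leftrightarrow(3)$ remains unproven. Your outline is a plausible roadmap for reproving Costara's result, but to be a proof of the theorem it must either complete those estimates or, as the paper does, discharge $(1)\Leftrightarrow(3)$ by citation and build the rest on top of it.
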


\begin{proof}
(1)$\Leftrightarrow (3)$ has been established in \cite{costara1}
(see Theorem 3.7 in \cite{costara1} for a proof). We prove here
(1)$\Leftrightarrow (2)$.  Let $(s_1,s_2,p)\in \Gamma_3$. Then by
(1)$\Leftrightarrow (3)$, $|p|\leq 1$ and there exist
$(c_1,c_2)\in \Gamma_2$ such that
\[
s_1=c_1+\bar{c_2}p,\quad s_2=c_2+\bar{c_1}p\,.
\]
Since $(c_1,c_2)\in \Gamma_2$, there are complex numbers $u_1,u_2$
of modulus not greater than $1$ such that $c_1=u_1+u_2$ and
$c_2=u_1u_2$. For $\omega\in\mathbb T$ if we choose $d_1=\omega
c_1 \text{ and } d_2=\omega^2 c_2$ we see that
\[
d_1=\omega u_1+\omega u_2 \text{ and } d_2 = (\omega u_1)(\omega
u_2)\,,
\]
which means that $(d_1,d_2) \in \Gamma_2$. Now
\begin{align*}
& \omega s_1 =\omega(c_1+\bar{c_2}p)=\omega c_1+\overline{\omega^2
c_2}(\omega^3 p)=d_1+\bar{d_2}(\omega^3 p)\,, \\& \omega^2
s_2=\omega^2(c_2+\bar{c_1}p)=\omega^2 c_2+\overline{\omega
c_1}(\omega^3 p)=d_2+\bar{d_1}(\omega^3 p).
\end{align*}
Therefore, by part (1)$\Leftrightarrow (3)$, $(\omega s_1,\omega^2
s_2, \omega^3 p)\in \Gamma_3$. The other side of the proof is
trivial.

\end{proof}

In a similar fashion, we have the following characterizations for
$\Gamma_3$-contractions.

\begin{thm}\label{thm:sc2}
Let $(S_1,S_2,P)$ be a triple of commuting operators acting on a
Hilbert space $\mathcal H$. Then the following are equivalent:

\begin{enumerate}
\item $(S_1,S_2,P)$ is a $\Gamma_3$-contraction\,; \item for all
holomorphic polynomials $f$ in three variables

\[
\|f(S_1,S_2,P)\|\leq \|f\|_{\infty,\Gamma_3}=\sup
\{|f(s_1,s_2,p)|\,:\,(s_1,s_2,p)\in\Gamma_3\} \,;
\]

\item $(\omega S_1,\omega^2 S_2,\omega^3 P)$ is a
$\Gamma_3$-contraction for any $\omega\in\mathbb T$.
\end{enumerate}

\end{thm}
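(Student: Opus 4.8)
The plan is to prove the equivalences by exploiting two facts recorded earlier: the polynomial convexity of $\Gamma_3$ and the rotational symmetry $(s_1,s_2,p)\in\Gamma_3 \Leftrightarrow (\omega s_1,\omega^2 s_2,\omega^3 p)\in\Gamma_3$ established in the equivalence $(1)\Leftrightarrow(2)$ of Theorem \ref{thm:sc1}. The implication $(1)\Rightarrow(2)$ is immediate, since every holomorphic polynomial belongs to $\mathcal R(\Gamma_3)$ and the defining spectral inequality for a $\Gamma_3$-contraction is asserted for all of $\mathcal R(\Gamma_3)$.

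For $(2)\Rightarrow(1)$ I would first recover the Taylor spectral containment. If $\lambda\in\sigma_T(S_1,S_2,P)$, then by the spectral mapping property of the Taylor joint spectrum, $f(\lambda)\in\sigma(f(S_1,S_2,P))$ for every holomorphic polynomial $f$, whence $|f(\lambda)|\leq\|f(S_1,S_2,P)\|\leq\|f\|_{\infty,\Gamma_3}$ by hypothesis $(2)$. Thus $\lambda$ lies in the polynomial hull of $\Gamma_3$, which coincides with $\Gamma_3$ since $\Gamma_3$ is polynomially convex; hence $\sigma_T(S_1,S_2,P)\subseteq\Gamma_3$. To upgrade $(2)$ from polynomials to all of $\mathcal R(\Gamma_3)$, I would invoke the Oka--Weil theorem: as $\Gamma_3$ is polynomially convex, any $f\in\mathcal R(\Gamma_3)$ can be approximated uniformly on $\Gamma_3$ by holomorphic polynomials $p_n$. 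By hypothesis $(2)$ the sequence $(p_n(S_1,S_2,P))$ is then Cauchy in operator norm, hence convergent; one checks, using continuity of the holomorphic functional calculus on a neighborhood of $\sigma_T(S_1,S_2,P)\subseteq\Gamma_3$, that the limit is $f(S_1,S_2,P)$. Passing to the limit in $\|p_n(S_1,S_2,P)\|\leq\|p_n\|_{\infty,\Gamma_3}$ yields the inequality for $f$, so $(S_1,S_2,P)$ is a $\Gamma_3$-contraction.

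The equivalence $(1)\Leftrightarrow(3)$ rests on the symmetry of $\Gamma_3$. For a fixed $\omega\in\mathbb T$, set $\rho_\omega(s_1,s_2,p)=(\omega s_1,\omega^2 s_2,\omega^3 p)$. By the implication $(1)\Leftrightarrow(2)$ of Theorem \ref{thm:sc1}, $\rho_\omega$ maps $\Gamma_3$ bijectively onto itself, with inverse $\rho_{\bar\omega}$. The triple $(\omega S_1,\omega^2 S_2,\omega^3 P)$ is plainly commuting, and for any holomorphic polynomial $f$ the composition $f\circ\rho_\omega$ is again a holomorphic polynomial, so that
\[
\|f(\omega S_1,\omega^2 S_2,\omega^3 P)\|=\|(f\circ\rho_\omega)(S_1,S_2,P)\|\leq\|f\circ\rho_\omega\|_{\infty,\Gamma_3}=\|f\|_{\infty,\Gamma_3},
\]
where the inequality uses the already-established equivalence $(1)\Leftrightarrow(2)$ for $(S_1,S_2,P)$ and the last equality uses that $\rho_\omega$ is a bijection of $\Gamma_3$. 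Invoking $(2)\Rightarrow(1)$ for the rotated triple shows it is a $\Gamma_3$-contraction, giving $(1)\Rightarrow(3)$; taking $\omega=1$ yields $(3)\Rightarrow(1)$ trivially.

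The main obstacle I anticipate is the rigorous passage from polynomials to rational functions in $(2)\Rightarrow(1)$: one must justify that the operator-norm limit of $p_n(S_1,S_2,P)$ is genuinely $f(S_1,S_2,P)$, which requires that the Oka--Weil approximation be reconciled with the multivariable functional calculus on a neighborhood of the Taylor spectrum rather than merely on $\Gamma_3$ itself. Once polynomial convexity and the rotational symmetry of Theorem \ref{thm:sc1} are in hand, the remaining steps are essentially formal.
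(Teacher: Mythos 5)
Your proposal is correct and takes essentially the same approach as the paper: $(1)\Rightarrow(2)$ directly from the definition of spectral set, $(2)\Rightarrow(1)$ from polynomial convexity of $\Gamma_3$, and $(1)\Rightarrow(3)$ by composing a polynomial with the rotation $(s_1,s_2,p)\mapsto(\omega s_1,\omega^2 s_2,\omega^3 p)$, equating sup norms via the invariance of $\Gamma_3$ from Theorem \ref{thm:sc1}, and then invoking $(2)\Rightarrow(1)$ for the rotated triple. The only difference is that you expand the paper's one-line appeal to polynomial convexity into a full argument (Taylor spectral mapping plus Oka--Weil); the limit-identification point you flag is real but closes easily, e.g.\ by writing $f=p/q$ and observing that $q(S_1,S_2,P)\cdot\lim_n p_n(S_1,S_2,P)=p(S_1,S_2,P)$, where $q(S_1,S_2,P)$ is invertible since $q$ has no zeros on $\Gamma_3\supseteq\sigma_T(S_1,S_2,P)$.
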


\begin{proof}

$(1)\Rightarrow (2)$ follows from definition of spectral set and
$(2)\Rightarrow (1)$ just requires polynomial convexity of the set
$\Gamma_3$. We prove here $(1)\Rightarrow (3)$ because
$(3)\Rightarrow (1)$ is obvious. Let $f(s_1,s_2,p)$ be a
holomorphic polynomial in the co-ordinates of $\Gamma_3$ and for
$\omega\in\mathbb T$ let $ f_1(s_1,s_2,p)=f(\omega s_1,\omega^2
s_2,\omega^3 p)$. It is evident from part $(1)\Rightarrow (2)$
that
\[
\sup\{|f(s_1,s_2,p)|\,:\,(s_1,s_2,p)\in\Gamma_3
\}=\sup\{|f_1(s_1,s_2,p)|\,:\,(s_1,s_2,p)\in \Gamma_3\}.
\]
Therefore,
\begin{align*}
\|f(\omega S_1, \omega^2 S_2, \omega^3 P)\|& =\|f_1(S_1,S_2,P)\|
\\& \leq \|f_1\|_{\infty, \Gamma_3} \\& =\|f\|_{\infty,\Gamma_3}.
\end{align*}
Therefore, by $(1)\Rightarrow (2)$, $(\omega S_1, \omega^2 S_2,
\omega^3 P)$ is a $\Gamma_3$-contraction.
\end{proof}

In \cite{sourav1}, two operator pencils $\Phi_1,\, \Phi_2$ were
introduced which played pivotal role in determining the classes of
$\Gamma_3$-contractions for which rational dilation failed or
succeeded. Here we recall the definition of $\Phi_1,\,\Phi_2$ for
any three commuting operators $S_1,S_2,P$ with $\|S_i\|\leq 3$ and
$P$ being a contraction.

\begin{align*}
\Phi_1(S_1,S_2,P)&=9(I-P^*P)+(S_1^*S_1-S_2^*S_2)-6\text{ Re
}(S_1-S_2^*P)\,,\\
\Phi_2(S_1,S_2,P)&=9(I-P^*P)+(S_2^*S_2-S_1^*S_1)-6\text{ Re
}(S_2-S_1^*P)\,.
\end{align*}

The following result whose proof could be found in \cite{sourav1}
(Proposition 4.4, \cite{sourav1}) is useful for this paper.

\begin{prop}\label{lem:3}
Let $(S_1,S_2,P)$ be a $\Gamma_3$-contraction. Then for $i=1,2$,
$\Phi_i(\alpha S_1, \alpha^2 S_2, \alpha^3 P)\geq 0$ for all
$\alpha\in \overline{\mathbb D}$.
\end{prop}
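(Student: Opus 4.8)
The plan is to realize each pencil, evaluated at the twisted triple, as the defect of an explicit contractive rational function, and then invoke the spectral-set hypothesis. First I would record the purely algebraic identities
\[
\Phi_1(S_1,S_2,P)=(3I-S_1)^*(3I-S_1)-(3P-S_2)^*(3P-S_2)
\]
and, with $S_1,S_2$ interchanged, $\Phi_2(S_1,S_2,P)=(3I-S_2)^*(3I-S_2)-(3P-S_1)^*(3P-S_1)$; both follow by expanding and using $6\,\mathrm{Re}(X)=3X+3X^*$. Substituting $(\alpha S_1,\alpha^2 S_2,\alpha^3 P)$ and setting $A_\alpha=3I-\alpha S_1$, $B_\alpha=3\alpha^3 P-\alpha^2 S_2$, the first identity becomes
\[
\Phi_1(\alpha S_1,\alpha^2 S_2,\alpha^3 P)=A_\alpha^*A_\alpha-B_\alpha^*B_\alpha,
\]
valid for every $\alpha$. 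Since $\|S_1\|\le 3$, the operator $A_\alpha$ is invertible for $|\alpha|<1$, and for such $\alpha$ the inequality $\Phi_1(\alpha S_1,\alpha^2 S_2,\alpha^3 P)\ge 0$ is equivalent to $B_\alpha^*B_\alpha\le A_\alpha^*A_\alpha$, hence to $\|B_\alpha A_\alpha^{-1}\|\le 1$.

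The substantive step is the scalar bound $\|f_{1,\alpha}\|_{\infty,\Gamma_3}\le 1$ for the rational function $f_{1,\alpha}(s_1,s_2,p)=(3\alpha^3 p-\alpha^2 s_2)/(3-\alpha s_1)$, together with the analogous $f_{2,\alpha}(s_1,s_2,p)=(3\alpha^3 p-\alpha s_1)/(3-\alpha^2 s_2)$ for $\Phi_2$. To prove it I would use $\Gamma_3=\pi_3(\overline{\mathbb D}^3)$: writing $s_1,s_2,p$ as the elementary symmetric functions of $z_1,z_2,z_3\in\overline{\mathbb D}$ and putting $w_j=\alpha z_j$ (so $|w_j|\le|\alpha|\le 1$), the estimate for $f_{1,\alpha}$ collapses to
\[
\bigl|3w_1w_2w_3-(w_1w_2+w_2w_3+w_3w_1)\bigr|\le\bigl|3-(w_1+w_2+w_3)\bigr|\quad\text{on }\overline{\mathbb D}^3 .
\]
The quotient $g$ of these two expressions is holomorphic on $\mathbb D^3$ (the denominator $3-\sum w_j$ vanishes on $\overline{\mathbb D}^3$ only at $(1,1,1)$), and a one-variable-at-a-time maximum-modulus argument pushes each coordinate out to $\mathbb T$. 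On the torus $\mathbb T^3$ the relations $\overline{w_j}=w_j^{-1}$ give $\overline{s_1}=s_2/p$, whence $|3p-s_2|=|p|\,|3-\overline{s_1}|=|3-s_1|$ and so $|g|\equiv 1$ there; the same computation, using $\overline{s_2}=s_1/p$, handles $f_{2,\alpha}$.

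With the estimate in hand, for $|\alpha|<1$ the denominator of $f_{i,\alpha}$ does not vanish on $\Gamma_3$ (since $|s_1|,|s_2|\le 3$ there while $|\alpha|<1$), so $f_{i,\alpha}\in\mathcal R(\Gamma_3)$ with $\|f_{i,\alpha}\|_{\infty,\Gamma_3}\le 1$; the spectral-set property then yields $\|f_{i,\alpha}(S_1,S_2,P)\|=\|B_\alpha A_\alpha^{-1}\|\le 1$, which by the first paragraph is exactly $\Phi_i(\alpha S_1,\alpha^2 S_2,\alpha^3 P)\ge 0$. Finally, since $\alpha\mapsto\langle\Phi_i(\alpha S_1,\alpha^2 S_2,\alpha^3 P)x,x\rangle$ is, for fixed $x$, a polynomial in $\alpha,\overline{\alpha}$, positivity on the open disc passes to $|\alpha|=1$ by continuity, giving the claim on all of $\overline{\mathbb D}$. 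I expect the maximum-modulus estimate to be the main obstacle: one must control the boundary singularity of $g$ at $(1,1,1)$ and check that the pole arising in each one-variable slice stays off the closed disc --- which it does, the only degenerate slice $w_i=w_j=1$ collapsing $g$ to the constant $-1$ --- so that the principle applies cleanly and the bound propagates from $\mathbb T^3$ to the whole polydisc.
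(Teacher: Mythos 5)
Your proof is correct, but be aware that there is no internal argument in the paper to compare it against: the paper states this proposition without proof, importing it from Proposition 4.4 of \cite{sourav1}. What you have written is therefore a self-contained substitute for an outsourced argument, and your route is the natural three-variable analogue of Agler--Young's proof that a $\Gamma_2$-contraction $(S,P)$ satisfies $\rho(\alpha S,\alpha^2P)\geq 0$ (see \cite{ay-jot}), which is also in substance how the cited reference proceeds. All the essential steps check out: the factorizations $\Phi_1(S_1,S_2,P)=(3I-S_1)^*(3I-S_1)-(3P-S_2)^*(3P-S_2)$ and its $\Phi_2$ counterpart are exact (no stray constant, unlike the factor $2$ in the $\Gamma_2$ case); $A_\alpha=3I-\alpha S_1$ and $3I-\alpha^2S_2$ are invertible for $|\alpha|<1$ because $\|S_1\|,\|S_2\|\leq 3$; positivity is correctly converted to $\|B_\alpha A_\alpha^{-1}\|\leq 1$; the spectral-set hypothesis applies because the denominators $3-\alpha s_1$ and $3-\alpha^2 s_2$ cannot vanish on $\Gamma_3$ when $|\alpha|<1$; and passing to $|\alpha|=1$ by continuity of $\alpha\mapsto\langle\Phi_i(\alpha S_1,\alpha^2S_2,\alpha^3P)x,x\rangle$ is legitimate. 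Your slice-wise maximum-modulus argument is also sound, and is even slightly overdressed: since $w_j=\alpha z_j$ with $|\alpha|<1$, you only ever need $|g|\leq 1$ on the open polydisc, so the boundary singularity at $(1,1,1)$ never enters the application; the one-variable poles $3-w_2-w_3$, $3-u_1-w_3$, $3-u_1-u_2$ lie off $\overline{\mathbb D}$ at each stage, with the sole degenerate slice $u_1=u_2=1$ collapsing to the constant $-1$. The only detail to add concerns $\Phi_2$: the denominator $3-(w_1w_2+w_2w_3+w_3w_1)$ vanishes at \emph{two} points of $\overline{\mathbb D}^3$, namely $(1,1,1)$ and $(-1,-1,-1)$, so the $f_{2,\alpha}$ case has two degenerate slices, $w_i=w_j=1$ (quotient identically $-1$) and $w_i=w_j=-1$ (quotient identically $+1$); your remark that $w_i=w_j=1$ is the only degenerate slice is accurate for $f_{1,\alpha}$ only. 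Both constants are unimodular and the rest of the argument is verbatim, so this is an omitted case rather than a gap in the method.
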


Here is a set of characterizations for the $\Gamma_3$-unitaries
and for a proof of this result see Theorem 5.2 in \cite{sourav1}
or, Theorem 4.2 in \cite{BSR}.

\begin{thm}\label{thm:tu}
Let $(S_1, S_2, P)$ be a commuting triple of bounded operators.
Then the following are equivalent.

\begin{enumerate}

\item $(S_1,S_2,P)$ is a $\Gamma_3$-unitary,

\item $P$ is a unitary and $(S_1,S_2,P)$ is a
$\Gamma_3$-contraction,

\item $(\dfrac{2}{3}S_1,\dfrac{1}{3}S_2)$ is a
$\Gamma_2$-contraction, $P$ is a unitary and $S_1 = S_2^* P$.
\end{enumerate}
\end{thm}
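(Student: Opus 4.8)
The plan is to prove the three statements equivalent by running the cycle $(1)\Rightarrow(2)\Rightarrow(3)\Rightarrow(1)$. The implication $(1)\Rightarrow(2)$ is the soft one: if $(S_1,S_2,P)$ is a $\Gamma_3$-unitary then $S_1,S_2,P$ are commuting normal operators, so they admit a joint spectral measure supported on $\sigma_T(S_1,S_2,P)\subseteq b\Gamma_3\subseteq\Gamma_3$. The spectral theorem then yields $\|f(S_1,S_2,P)\|=\sup_{\sigma_T}|f|\le\|f\|_{\infty,\Gamma_3}$ for every $f\in\mathcal R(\Gamma_3)$, so $\Gamma_3$ is a spectral set; and since every point of $b\Gamma_3$ has $|p|=|z_1z_2z_3|=1$, the normal operator $P$ has spectrum in $\mathbb T$ and is therefore unitary.

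For $(2)\Rightarrow(3)$ I would separate the algebraic relation from the $\Gamma_2$-bound. First I extract $S_1=S_2^*P$. Taking $P$ unitary, specialize Proposition \ref{lem:3} to $|\alpha|=1$: the term $9(I-|\alpha|^6P^*P)$ vanishes and, since on the circle $\bar\alpha^2\alpha^3=\alpha$, the inequalities collapse to $(S_1^*S_1-S_2^*S_2)-6\,\mathrm{Re}\,\alpha(S_1-S_2^*P)\ge0$ together with the analogous inequality coming from $\Phi_2$. Fixing a vector and optimizing the scalar inequalities over $\alpha\in\mathbb T$ (using $\max_{|\alpha|=1}\mathrm{Re}(\alpha c)=|c|$) forces $S_1^*S_1=S_2^*S_2$ and the vanishing of $\langle(S_1-S_2^*P)h,h\rangle$ for all $h$; complex polarization then gives $S_1=S_2^*P$ and symmetrically $S_2=S_1^*P$. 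For the $\Gamma_2$-contraction I would prove the geometric inclusion $(s_1,s_2,p)\in\Gamma_3\Rightarrow(\tfrac23 s_1,\tfrac13 s_2)\in\Gamma_2$ as a lemma (verifiable from Theorem \ref{thm:sc1} together with the standard description of $\Gamma_2$); then for any polynomial $g$ the composition $\tilde g(s_1,s_2,p)=g(\tfrac23 s_1,\tfrac13 s_2)$ is a polynomial on $\Gamma_3$, and the $\Gamma_3$-contraction hypothesis gives $\|g(\tfrac23 S_1,\tfrac13 S_2)\|=\|\tilde g(S_1,S_2,P)\|\le\|\tilde g\|_{\infty,\Gamma_3}\le\|g\|_{\infty,\Gamma_2}$.

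The implication $(3)\Rightarrow(1)$ is where I would expect normality to be the obstacle, but it dissolves. Given $S_1=S_2^*P$ with $P$ unitary, Fuglede's theorem upgrades $PS_2=S_2P$ to $PS_2^*=S_2^*P$; feeding $S_1=S_2^*P$ into the commutation $S_1S_2=S_2S_1$ then gives $S_2^*S_2P=S_2S_2^*P$, hence $S_2^*S_2=S_2S_2^*$, so $S_2$ is normal, and $S_1=S_2^*P$ is a product of commuting normals, hence normal. It remains to locate the joint spectrum. As $(S_1,S_2,P)$ is now a commuting normal triple, the relations $S_1=S_2^*P$ and $P$ unitary hold on the support of its spectral measure, i.e. every joint spectral point satisfies $|p|=1$ and $s_1=\bar{s_2}p$; moreover the $\Gamma_2$-contraction $(\tfrac23 S_1,\tfrac13 S_2)$ forces $(\tfrac23 s_1,\tfrac13 s_2)\in\Gamma_2$. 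Here the constants are exactly right: with $|p|=1$ one checks $s_1=\tfrac23 s_1+\overline{\tfrac13 s_2}\,p$ and $s_2=\tfrac13 s_2+\overline{\tfrac23 s_1}\,p$, so $(\tfrac23 s_1,\tfrac13 s_2)$ is precisely the $\Gamma_2$-representative demanded by Theorem \ref{thm:sc1}(3), whence $(s_1,s_2,p)\in\Gamma_3$; combined with $|p|=1$ this places the point in $b\Gamma_3=\Gamma_3\cap\{|p|=1\}$.

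The main obstacle I anticipate is not normality, which the commutativity-plus-Fuglede argument settles for free, but the two genuinely analytic inputs: (i) squeezing the \emph{exact} identities $S_1=S_2^*P$ and $S_1^*S_1=S_2^*S_2$ out of the one-parameter positivity family of Proposition \ref{lem:3}, where the simultaneous use of both pencils $\Phi_1,\Phi_2$ and the complex polarization step are essential; and (ii) the scalar inclusion $(\tfrac23 s_1,\tfrac13 s_2)\in\Gamma_2$ for $(s_1,s_2,p)\in\Gamma_3$, whose proof must explain why the weights $\tfrac23$ and $\tfrac13$ are the ones that make the coordinate projection land in $\Gamma_2$ and match the representation of Theorem \ref{thm:sc1}(3).
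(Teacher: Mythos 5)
The paper never actually proves this theorem: it is quoted as background material, with the reader referred to Theorem 5.2 of \cite{sourav1} and Theorem 4.2 of \cite{BSR}. So there is no in-paper argument to compare against, and your proposal has to be judged on its own terms. The architecture $(1)\Rightarrow(2)\Rightarrow(3)\Rightarrow(1)$ is right, and almost all of it is sound: the joint-spectral-measure argument for $(1)\Rightarrow(2)$; the extraction of $S_1=S_2^*P$, $S_2=S_1^*P$ and $S_1^*S_1=S_2^*S_2$ from Proposition \ref{lem:3} restricted to $|\alpha|=1$ (using both pencils, maximizing $\mathrm{Re}(\alpha c)$ over $\mathbb T$, then polarizing); and the whole of $(3)\Rightarrow(1)$ --- Fuglede upgrading $PS_2=S_2P$ to $PS_2^*=S_2^*P$, hence normality of $S_2$ and of $S_1=S_2^*P$, followed by the character computation $|p|=1$, $s_1=\bar s_2 p$, and the verification that $(c_1,c_2)=(\tfrac23 s_1,\tfrac13 s_2)$ is exactly the representation demanded by Theorem \ref{thm:sc1}(3), placing the joint spectrum in $\Gamma_3\cap\{|p|=1\}=b\Gamma_3$. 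These steps all check out, and they are the same circle of ideas as the proofs in the cited references.

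The one genuine gap is the scalar inclusion you defer: $(s_1,s_2,p)\in\Gamma_3\Rightarrow(\tfrac23 s_1,\tfrac13 s_2)\in\Gamma_2$. You claim it is ``verifiable from Theorem \ref{thm:sc1} together with the standard description of $\Gamma_2$,'' but that route does not work in any transparent way: writing $s_1=c_1+\bar c_2 p$, $s_2=c_2+\bar c_1 p$ with $(c_1,c_2)\in\Gamma_2$ and trying to exhibit a $\Gamma_2$-representation of $(\tfrac23 s_1,\tfrac13 s_2)$ directly leads nowhere pleasant. The correct and essentially one-line argument is Gauss--Lucas: if $(s_1,s_2,p)=\pi_3(z_1,z_2,z_3)$ with $|z_i|\le 1$, then $q(t)=\prod_{i=1}^{3}(t-z_i)=t^3-s_1t^2+s_2t-p$ has derivative $q'(t)=3\bigl(t^2-\tfrac23 s_1t+\tfrac13 s_2\bigr)$, whose roots $w_1,w_2$ lie in the convex hull of $\{z_1,z_2,z_3\}\subseteq\overline{\mathbb D}$; since $w_1+w_2=\tfrac23 s_1$ and $w_1w_2=\tfrac13 s_2$, the inclusion follows, and this also answers your own question of why the weights $\tfrac23$ and $\tfrac13$ are the right ones --- they come from differentiating the cubic. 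Alternatively you may simply cite Lemma 2.5 of \cite{BSR}, which is the operator form of this statement and which the paper itself invokes in Section 3; your derivation of the operator statement from the scalar inclusion (via the polynomial von Neumann inequality and polynomial convexity of $\Gamma_2$) is then fine. With that lemma supplied, your proof is complete.
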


\section{Proof of Theorem \ref{thm:decomp}}

First we consider the case when $P$ is a completely non-unitary
contraction. Then obviously $\mathcal H_1=\{0\}$ and if $P$ is a
unitary then $\mathcal H=\mathcal H_1$ and so $\mathcal
H_2=\{0\}$. In such cases the theorem is trivial. So let us
suppose that $P$ is neither a unitary nor a completely non unitary
contraction. With respect to the decomposition $\mathcal
H=\mathcal H_1\oplus \mathcal H_2$, let
\[
S_1=
\begin{bmatrix}
S_{111} & S_{112}\\
S_{121} & S_{122}
\end{bmatrix}\,,\,
S_2=
\begin{bmatrix}
S_{211} & S_{212}\\
S_{221} & S_{222}
\end{bmatrix}
\text{ and } P=
\begin{bmatrix}
P_1&0\\
0&P_2
\end{bmatrix}
\]
so that $P_1$ is a unitary and $P_2$ is completely non-unitary.
Since $P_2$ is completely non-unitary it follows that if
$h\in\mathcal H$ and
\[
\|P_2^nh\|=\|h\|=\|{P_2^*}^nh\|, \quad n=1,2,\hdots
\]
then $h=0$.

By the commutativity of $S_1$ and $P$ we obtain

\begin{align}
S_{111}P_1&=P_1S_{111}    & S_{112}P_2=P_1S_{112}\,, \label{eqn:1} \\
S_{121}P_1&=P_2S_{121}    & S_{122}P_2=P_2S_{122}\,. \label{eqn:2}
\end{align}

Also the commutativity of $S_2$ and $P$ gives

\begin{align}
S_{211}P_1&=P_1S_{211}    & S_{212}P_2=P_1S_{212}\,, \label{eqn:3} \\
S_{221}P_1&=P_2S_{221}    & S_{222}P_2=P_2S_{222}\,. \label{eqn:4}
\end{align}
By Proposition \ref{lem:3}, we have for all $\omega,
\beta\in\mathbb T$,
\begin{align*}
\Phi_1(\omega S_1,\omega^2 S_2,\omega^3
P)&=9(I-P^*P)+(S_1^*S_1-S_2^*S_2)-6\text{
Re }\omega(S_1-S_2^*P)\geq 0 \,,\\
\Phi_2(\beta S_1,\beta^2 S_2, \beta^3
P)&=9(I-P^*P)+(S_2^*S_2-S_1^*S_1)-6\text{ Re }\beta^2(S_2-S_1^*P)
\geq 0 \,.
\end{align*}
Adding $\Phi_1$ and $\Phi_2$ we get
\[
3(I-P^*P)-\text{Re }\omega(S_1-S_2^*P)-\text{Re }
\beta^2(S_2-S_1^*P)\geq 0
\]
that is
\begin{align}\label{eqn:5}
\begin{bmatrix}
0&0\\
0&3(I-P_2^*P_2)
\end{bmatrix}
-& \text{ Re }\omega
\begin{bmatrix}
S_{111}-S_{211}^*P_1 & S_{112}-S_{221}^*P_2\\
S_{121}-S_{212}^*P_1&S_{122}-S_{222}^*P_2
\end{bmatrix} \\
-&\text{ Re }\beta^2
\begin{bmatrix}
S_{211}-S_{111}^*P_1&S_{212}-S_{121}^*P_2\\
S_{221}-S_{112}^*P_1&S_{222}-S_{122}^*P_2
\end{bmatrix}\, \geq 0 \notag
\end{align}
for all $\omega,\beta\in\mathbb T$. Since the matrix in the left
hand side of (\ref{eqn:5}) is self-adjoint, if we write
(\ref{eqn:5}) as

\begin{equation}\label{eqn:6}
\begin{bmatrix}
R&X\\
X^*&Q
\end{bmatrix}
\geq 0\,,
\end{equation}
then

\begin{eqnarray*}\begin{cases}
&(\mbox{i})\; R\,, Q \geq 0 \text{ and } R=-\text{ Re }\omega (
S_{111}-S_{211}^*P_1) -\text{ Re }\beta^2
(S_{211}-S_{111}^*P_1)\\& (\mbox{ii}) X= -\frac{1}{2} \{ \omega (
S_{112}-S_{221}^*P_2)+\bar{\omega}(S_{121}^*-P_1^*S_{212})\\&
\quad \quad \quad \quad + \beta^2
(S_{212}-S_{121}^*P_2)+\bar{\beta^2}(S_{221}^*-P_1^*S_{112}) \}
\\&(\mbox{iii})\; Q=3(I-P_2^*P_2)-\text{ Re }\omega (S_{122}-S_{222}^*P_2) -
\text{ Re }\beta^2 (S_{222}-S_{122}^*P_2) \;.
\end{cases}
\end{eqnarray*}

Since the left hand side of (\ref{eqn:6}) is a positive
semi-definite matrix for every $\omega$ and $\beta$, if we choose
$\beta^2=1$ and $\beta^2=-1$ respectively then consideration of
the $(1,1)$ block reveals that
\[
\omega(S_{111}-S_{211}^*P_1)+\bar{\omega}(S_{111}^*-P_1^*S_{211})\leq
0
\]
for all $\omega\in\mathbb T$. Choosing $\omega =\pm 1$ we get
\begin{equation}\label{eqn:7}
(S_{111}-S_{211}^*P_1)+(S_{111}^*-P_1^*S_{211})=0
\end{equation}
and choosing $\omega =\pm i$ we get
\begin{equation}\label{eqn:8}
(S_{111}-S_{211}^*P_1)-(S_{111}^*-P_1^*S_{211})=0\,.
\end{equation}
Therefore, from (\ref{eqn:7}) and (\ref{eqn:8}) we get
\[
S_{111}=S_{211}^*P_1\,,
\]
where $P_1$ is unitary. Similarly, we can show that
\[
S_{211}=S_{111}^*P_1\,.
\]
Therefore, $R=0$. Since $(S_1,S_2,P)$ is a $\Gamma_3$-contraction,
$\|S_2\|\leq 3$ and hence $\|S_{211}\|\leq 3$. Also since
$(S_1,S_2,P)$ is a $\Gamma_3$-contraction, by Lemma 2.5 of
\cite{BSR} $(\dfrac{2}{3}S_1,\dfrac{1}{3}S_2)$ is a
$\Gamma_2$-contraction and hence
$(\dfrac{2}{3}S_{111},\dfrac{1}{3}S_{211})$ is a
$\Gamma_2$-contraction. Therefore, by part-(3) of Theorem
\ref{thm:tu}, $(S_{111},S_{211},P_1)$ is a $\Gamma_3$-unitary.\\

Now we apply Proposition 1.3.2 of \cite{bhatia} to the positive
semi-definite matrix in the left hand side of (\ref{eqn:6}). This
Proposition states that if $R,Q \geq 0$ then $\begin{bmatrix} R&X
\\ X^*&Q
\end{bmatrix} \geq 0$ if and only if $X=R^{1/2}KQ^{1/2}$ for
some contraction $K$.\\

\noindent Since $R=0$, we have $X=0$. Therefore,
\[
\omega (
S_{112}-S_{221}^*P_2)+\bar{\omega}(S_{121}^*-P_1^*S_{212})+
\beta^2
(S_{212}-S_{121}^*P_2)+\bar{\beta^2}(S_{221}^*-P_1^*S_{112}) =0\;,
\]
for all $\omega,\beta \in\mathbb T$. Choosing $\beta^2 =\pm 1$ we
get
\[
\omega (
S_{112}-S_{221}^*P_2)+\bar{\omega}(S_{121}^*-P_1^*S_{212})=0\;,
\]
for all $\omega \in \mathbb T$. With the choices $\omega=1,i$ ,
this gives
\[
S_{112}=S_{221}^*P_2\,.
\]
Therefore, we also have
\[
S_{121}^*=P_1^*S_{212}\,.
\]
Similarly, we can prove that
\[
S_{212}=S_{121}^*P_2\,,\quad S_{221}^*=P_1^*S_{112}\,.
\]
Thus, we have the following equations
\begin{align}
S_{112}&=S_{221}^*P_2         & S_{121}^*&=P_1^*S_{212} \label{eqn:9}\\
S_{212}&=S_{121}^*P_2         & S_{221}^*&=P_1^*S_{112}\,.
\label{eqn:10}
\end{align}
Thus from (\ref{eqn:9}), $S_{121}=S_{212}^*P_1$ and together with
the first equation in (\ref{eqn:2}), this implies that
\[
S_{212}^*P_1^2=S_{121}P_1=P_2S_{121}=P_2S_{212}^*P_1
\]
and hence
\begin{equation}\label{eqn:11}
S_{212}^*P_1=P_2S_{212}^*\,.
\end{equation}
From equations in (\ref{eqn:3}) and (\ref{eqn:11}) we have that
\[
S_{212}P_2=P_1S_{212}\,, \quad S_{212}{P_2^*}={P_1^*}S_{212}.
\]
Thus
\begin{align*}
S_{212}P_2{P_2^*} &=P_1S_{212}{P_2^*} =P_1{P_1^*}S_{212}
=S_{212}\,, \\
S_{212}{P_2^*}P_2 &= {P_1^*}S_{212}P_2
={P_1^*}P_1S_{212}=S_{212}\,,
\end{align*}
and so we have
\[
P_2{P_2^*}S_{212}^*=S_{212}^*={P_2^*}P_2S_{212}^*\,.
\]
This shows that $P_2$ is unitary on the range of $S_{212}^*$ which
can never happen because $P_2$ is completely non-unitary.
Therefore, we must have $S_{212}^*=0$ and so $S_{212}=0$.
Similarly we can prove that $S_{112}=0$. Also from (\ref{eqn:9}),
$S_{121}=0$ and from (\ref{eqn:10}), $S_{221}=0$. Thus with
respect to the decomposition $\mathcal H=\mathcal H_1\oplus
\mathcal H_2$
\[
S_1=
\begin{bmatrix}
S_{111}&0\\
0&S_{122}
\end{bmatrix}\,, \quad
S_2=
\begin{bmatrix}
S_{211}&0\\
0&S_{222}
\end{bmatrix}.
\]
So, $\mathcal H_1$ and $\mathcal H_2$ reduce $S_1$ and $S_2$. Also
$(S_{122},S_{222},P_2)$, being the restriction of the $\mathbb
E$-contraction $(S_1,S_2,P)$ to the reducing subspace $\mathcal
H_2$, is an $\Gamma_3$-contraction. Since $P_2$ is completely
non-unitary, $(S_{122},S_{222},P_2)$ is a completely non-unitary
$\Gamma_3$-contraction.

\end{document}